\theoremstyle{plain}
\newtheorem{thm}{Theorem}[section]
\theoremstyle{definition}
\newtheorem{de}[thm]{Definition}
\newtheorem{co}[thm]{Corollary}
\theoremstyle{remark}
\newtheorem{re}{\sc \textbf{Remark}}
\numberwithin{equation}{section}
\newcolumntype{?}{!{\vrule width 1.4pt}}
\def\correspondingauthor{\footnote{Corresponding author.}}
\renewenvironment{abstract}
               {\list{}{\rightmargin\leftmargin}%
                \item[\textbf{\hspace{8.6mm}Abstract ---}]\relax}
               {\endlist}
\DeclareUrlCommand{\url}{%
    \def\UrlLeft##1\UrlRight{\underline{##1}}}
\date{}
\title{A Comprehensive Subclass of Bi-Univalent Functions Associated with Chebyshev Polynomials of the Second Kind}
\author{Feras Yousef$^{1,}$\correspondingauthor{}\,\,,\, Somaia Alroud$^{2}$,  Mohamed Illafe$^{3}$ \vspace{0.1in}\\
\footnotesize{$^{1,2}$Department of Mathematics, The University of Jordan, Amman 11942, Jordan. }  \\
 \footnotesize{$^{3}$School of Engineering, Math, $\&$ Technology, Navajo Technical University, Crownpoint, NM 87313, USA. }  \\
 \footnotesize{e-mail: $^1$fyousef@ju.edu.jo}, \,$^2$somaiaroud41@gmail.com,\, $^3$millafe@navajotech.edu}
\begin{document}
\maketitle
\begin{abstract} Our objective in this paper is to introduce and investigate a newly-constructed subclass of normalized analytic and bi-univalent functions by means of the Chebyshev polynomials of the second kind. Upper bounds for the second and third Taylor-Maclaurin coefficients, and also Fekete-Szeg\"{o} inequalities of functions belonging to this subclass are founded. Several connections to some of the earlier known results are also pointed out. \\
\end{abstract}

{\bf Keywords:} Analytic functions; Univalent and bi-univalent functions; Taylor-Maclaurin series; Fekete-Szeg\"{o} problem; Chebyshev polynomials; Coefficient bounds; Subordination.\\

{\bf 2010 Mathematics Subject Classification.}  Primary 30C45; Secondary 30C50.

\section{Introduction, Definitions and Notations}
Let $\mathcal{A}$ denote the class of all analytic functions $f$ defined in the open unit disk $\mathbb{U}=\{z\in\mathbb{C}:\left\vert z\right\vert <1\}$ and normalized by the condition $f(0)= f^{\prime}(0)-1=0$. Thus each $f\in\mathcal{A}$ has a Taylor-Maclaurin series expansion of the form:
\begin{equation} \label{ieq1}
f(z)=z+\sum\limits_{n=2}^{\infty}a_{n}z^{n}, \ \  (z \in\mathbb{U}).
\end{equation}

Further, let $\mathcal{S}$ denote the class of all functions $f \in\mathcal{A}$ which are univalent in $\mathbb{U}$ (for details, see \cite{Duren}; see also some of the recent investigations \cite{L,C1,C4,C5,C6}).

Given two functions $f$ and $g$ in $\mathcal{A}$. The function $f$ is said to be subordinate to $g$ in $\mathbb{U}$, written as $f(z)$ $\prec$
$g(z)$, if there exists a Schwarz function $\omega(z)$, analytic in $\mathbb{U}$, with
\begin{center}
$\omega(0) = 0$ and $\left\vert \omega(z)\right\vert <1$ for all $z \in\mathbb{U}$,
\end{center}
such that $f(z)=g\left(\omega(z)\right)$ for all $z \in\mathbb{U}$. Furthermore, if the function $g$ is univalent in $\mathbb{U}$, then we have the following equivalence (see \cite{mill} and \cite{C2}):%
\[
f(z)\prec g(z)\Leftrightarrow f(0)=g(0)\text{ and }f(\mathbb{U})\subset
g(\mathbb{U}).
\]

Two of the important and well-investigated subclasses of the analytic and univalent function class $\mathcal{S}$ are the class
$\mathcal{S}^{\ast}(\alpha)$ of starlike functions of order $\alpha$ in $\mathbb{U}$ and the class $\mathcal{K}(\alpha)$ of convex functions of order $\alpha$ in $\mathbb{U}$. By definition, we have

\begin{equation}
\mathcal{S}^{\ast}(\alpha):=\left\{ f: \ f \in \mathcal{S} \ \ \text{and} \ \ \mbox{Re}\left\{ \frac{zf^{\prime }(z)}{f(z)}\right\} >\alpha,\quad (z\in \mathbb{%
U}; 0\leq \alpha <1) \right\},  \label{d1}
\end{equation}%
and
\begin{equation}
\mathcal{K}(\alpha):=\left\{ f: \ f \in \mathcal{S} \ \ \text{and} \ \ \mbox{Re}\left\{ 1+\frac{zf^{\prime \prime }(z)}{f^{\prime }(z)}\right\} >\alpha,\quad (z\in \mathbb{%
U}; 0\leq \alpha <1) \right\}. \label{d2}
\end{equation}%

It is clear from the definitions (\ref{d1}) and (\ref{d2}) that $\mathcal{K}(\alpha) \subset \mathcal{S}^{\ast}(\alpha)$. Also we have

\begin{equation*}
f(z) \in \mathcal{K}(\alpha) \ \ \text{iff} \ \ zf^{\prime}(z) \in \mathcal{S}^{\ast}(\alpha),
\end{equation*}%
and
\begin{equation*}
f(z) \in \mathcal{S}^{\ast}(\alpha) \ \ \text{iff} \ \ \int_{0}^{z} \frac{f(t)}{t} dt =F(z) \in \mathcal{K}(\alpha).
\end{equation*}%

It is well-known \cite{Duren} that every function $f\in \mathcal{S}$ has an inverse map $f^{-1}$ that satisfies the following conditions:
\begin{center}
$f^{-1}(f(z))=z \ \ \ (z\in \mathbb{U}),$
\end{center}
and
\begin{center}
$f\left(f^{-1}(w)\right)=w$ $\ \ \ \left( |w|<r_{0}(f);r_{0}(f)\geq\frac{1}{4}\right)$.
\end{center}

In fact, the inverse function is given by
\begin{equation} \label{ieq2}
f^{-1}(w)=w-a_{2}w^{2}+(2a_{2}^{2}-a_{3})w^{3}-(5a_{2}^{3}-5a_{2}a_{3}+a_{4})w^{4}+\cdots.
\end{equation}

A function $f\in \mathcal{A}$ is said to be bi-univalent in $\mathbb{U}$ if both $f(z)$ and $f^{-1}(z)$ are univalent in $\mathbb{U}$. Let $\Sigma$ denote the class of bi-univalent functions in $\mathbb{U}$ given by (\ref{ieq1}). For a brief history and some interesting examples of functions and characterization of the class $\Sigma$, see Srivastava et al. \cite{C27}, Frasin and Aouf \cite{C28}, and Magesh and Yamini \cite{F3}.

In 1967, Lewin \cite{C21} investigated the bi-univalent function class $\Sigma $ and showed that $|a_{2}|<1.51$. Subsequently, Brannan and Clunie \cite{C22} conjectured that  $|a_{2}|\leq \sqrt{2}.$ Later, Netanyahu \cite{C23} showed that $\max$ $|a_{2}|=\frac{4}{3}$ if $f\in \Sigma.$ Brannan and Taha \cite{C25} introduced certain subclasses of a bi-univalent function class $\Sigma$ similar to the familiar subclasses $\mathcal{S}^{\ast}(\alpha)$ and $\mathcal{K}(\alpha)$ of starlike and convex functions of order $\alpha$ ($0\leq \alpha <1$), respectively (see \cite{C26}). Thus, following the works of Brannan and Taha \cite{C25}, for $0\leq \alpha <1,$ a function $f\in \Sigma $ is in the class $\mathcal{S}_{\Sigma}^{\ast }\left( \alpha \right) $ of bi-starlike functions of order $\alpha$;
or $\mathcal{K}_{\Sigma }\left( \alpha \right)$ of bi-convex functions of order $\alpha$ if both $f$ and $f^{-1}$ are respectively starlike or convex
functions of order $\alpha.$ Recently, many researchers have introduced and investigated several interesting subclasses of the bi-univalent function class $\Sigma$ and they have found non-sharp estimates on the first two Taylor-Maclaurin coefficients $|a_{2}|$ and $|a_{3}|$. In fact, the aforecited work of Srivastava et al. \cite{C27} essentially revived the investigation of various subclasses of the bi-univalent function class $\Sigma$ in recent years; it was followed by such works as those by Frasin and Aouf \cite{C28}, Xu et al. \cite{C210}, \c{C}a\u{g}lar et al. \cite{C29}, and others (see, for example, \cite{C211,C212,F4,F1} and \cite{C213}). The coefficient estimate problem for each of the following Taylor-Maclaurin coefficients $|a_{n}|$ $(n\in \mathbb{N}\backslash \{1,2\})$ for each $f\in \Sigma$  given by (\ref{ieq1}) is still an open problem.
\newpage

The Chebyshev polynomials are a sequence of orthogonal polynomials that are related to De Moivre's formula and which can be defined recursively. They have abundant properties, which make them useful in many areas in applied mathematics, numerical analysis and approximation theory. There are four kinds of Chebyshev polynomials, see for details Doha \cite{Doha} and Mason \cite{Mason}. The Chebyshev polynomials of degree $n$ of the second kind, which are denoted $U_{n}(t)$, are defined for $t\in \lbrack -1,1]$ by the following three-terms recurrence relation:
\begin{eqnarray*}
&& U_0(t)=1, \nonumber \\
&& U_1(t)=2t, \nonumber \\
&&U_{n+1}(t):=2tU_n(t)-U_{n-1}(t).
\end{eqnarray*}

The first few of the Chebyshev polynomials of the second kind are
\begin{equation} \label{ieq5}
U_{2}(t)=4t^{2}-1, \ U_{3}(t)=8t^{3}-4t, \ U_{4}(t)=16t^{4}-12t^{2}+1,\cdots.
\end{equation}

The generating function for the Chebyshev polynomials of the second kind, $U_{n}(t)$, is given by:
\begin{equation*}
H(z, t) = \frac{1}{1-2tz+z^{2}}=\sum\limits_{n=0}^{\infty }U_{n}(t)z^{n} \ \ \ (z\in
\mathbb{U}).
\end{equation*}

Yousef et al. \cite{Feras} introduced the following class $\mathscr{B}_{\Sigma }^{\mu }(\beta,\lambda,\delta)$ of analytic and bi-univalent functions defined as follows:

\begin{de} \label{def22}
For $\lambda \geq 1,\mu \geq 0, \delta \geq 0$ and $0 \leq \beta < 1$, a function $f\in \Sigma $ given by (\ref{ieq1}) is said to be in the class $\mathscr{B}_{\Sigma }^{\mu }(\beta, \lambda ,\delta)$ if the following conditions hold for all $z,w\in \mathbb{U}$:
\begin{equation} \label{ieq23}
\mbox{Re}\left((1-\lambda )\left(\frac{f(z)}{z}\right)^{\mu}+\lambda f^{\prime }(z)\left(\frac{f(z)}{z}\right)^{\mu -1}+\xi\delta zf^{\prime \prime }(z)\right)>\beta
\end{equation}
and
\begin{equation} \label{ieq24}
\mbox{Re} \left((1-\lambda)\left(\frac{g(w)}{w}\right)^{\mu }+\lambda g^{\prime }(w)\left(\frac{g(w)}{w}\right)^{\mu -1}+\xi\delta wg^{\prime \prime }(w)\right)>\beta,
\end{equation}
where the function $g(w)=f^{-1}(w)$ is defined by (\ref{ieq2}) and $\xi=\frac{2\lambda +\mu }{2\lambda +1}$.
\end{de}

This work is concerned with the coefficient bounds for the Taylor-Maclaurin coefficients $|a_2|$ and $|a_3|$ and the Fekete-Szeg\"{o} inequality for functions belonging to the class $\mathscr{B}_{\Sigma }^{\mu }(\lambda ,\delta ,t)$ defined as follows:

\begin{de} \label{def221}
For $\lambda \geq 1,\mu \geq 0, \delta \geq 0$ and $t\in \left(\frac{1}{2},1\right) $, a function $%
f\in \Sigma $ given by (\ref{ieq1}) is said to be in the class $\mathscr{B}_{\Sigma }^{\mu }(\lambda ,\delta ,t)$ if the following
subordinations hold for all $z,w\in \mathbb{U}$:
\begin{equation} \label{ieq3}
\left( 1-\lambda \right) \left(\frac{f(z)}{z}\right)^{\mu }+\lambda f^{\prime }(z)\left(\frac{f(z)}{z}\right)^{\mu -1}+\xi \delta z f^{\prime \prime }(z)\prec H(z,t):=\frac{1}{1-2tz+z^{2}}
\end{equation}
and
\begin{equation} \label{ieq4}
\left( 1-\lambda \right) \left(\frac{g(w)}{w}\right)^{\mu }+\lambda g^{\prime }(w)\left(\frac{g(w)}{w}\right)^{\mu -1}+\xi\delta w g^{\prime \prime }(w)\prec H(w,t):=\frac{1}{1-2tw+w^{2}},
\end{equation}
where the function $g(w)=f^{-1}(w)$ is defined by (\ref{ieq2}) and $\xi=\frac{2\lambda +\mu }{2\lambda +1}$.
\end{de}

The following special cases of Definitions \ref{def221} are worthy of note:

\begin{re}
Note that for $\lambda=1, \mu=1$ and $\delta =0$, the class of functions $\mathscr{B}_{\Sigma }^{1}(1,0,t):=\mathscr{B}_{\Sigma }(t)$ have been introduced and studied by Altinkaya and Yal\c{c}in \cite{F0}, for $\mu=1$ and $\delta =0$, the class of functions $\mathscr{B}_{\Sigma }^{1}(\lambda,0,t):=\mathscr{B}_{\Sigma }(\lambda,t)$ have been introduced and studied by Bulut et al. \cite{Bul}, for $\delta=0$, the class of functions $\mathscr{B}_{\Sigma }^{\mu}(\lambda,0,t):=\mathscr{B}_{\Sigma }^{\mu}(\lambda,t)$ have been introduced and studied by Bulut et al. \cite{M}, and for $\mu=1$, the class of functions $\mathscr{B}_{\Sigma }^{1}(\lambda,\delta,t):=\mathscr{B}_{\Sigma }(\lambda,\delta,t)$ have been introduced and studied by Yousef et al. \cite{C3}.
\end{re}

\section{Coefficient bounds for the function class $\mathscr{B}_{\Sigma }^{\mu }(\lambda ,\delta ,t)$}

In this section, we establish coefficient bounds for the Taylor-Maclaurin coefficients $|a_2|$ and $|a_3|$ of the function $f \in \mathscr{B}_{\Sigma }^{\mu }(\lambda ,\delta ,t)$. Several corollaries of the main result are
also considered.

\begin{thm}
\label{thm1} Let the function $f(z)$  given by (\ref{ieq1}) be in the class $\mathscr{B}_{\Sigma }^{\mu }(\lambda ,\delta ,t)$. Then
\begin{equation} \label{theq1}
|a_{2}|\leq \frac{2t\sqrt{2t}}{\sqrt{|(\lambda+\mu+2\xi\delta)^{2}-2[2(\lambda+\mu +2\xi\delta)^{2}- (2\lambda + \mu)(\mu+1)-12 \xi\delta]t^{2}|}}
\end{equation}
and
\begin{equation} \label{theq2}
\hspace{-2.2in} |a_{3}|\leq \frac{4t^{2}}{(\lambda + \mu +2\xi\delta )^{2}}+\frac{2t}{2\lambda+\mu +6\xi\delta}.
\end{equation}

\end{thm}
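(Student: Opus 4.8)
The plan is to translate the two subordinations \eqref{ieq3} and \eqref{ieq4} into coefficient identities through Schwarz functions, and then extract the bounds using only the elementary estimates on Schwarz coefficients. First I would invoke the definition of subordination to write $u(z)=c_{1}z+c_{2}z^{2}+\cdots$ and $v(w)=d_{1}w+d_{2}w^{2}+\cdots$ for the analytic self-maps of $\mathbb{U}$ fixing the origin, so that the left-hand sides of \eqref{ieq3} and \eqref{ieq4} equal $H(u(z),t)$ and $H(v(w),t)$. Using the generating function, $H(\cdot,t)=1+U_{1}(t)(\cdot)+U_{2}(t)(\cdot)^{2}+\cdots$ with $U_{1}(t)=2t$ and $U_{2}(t)=4t^{2}-1$, I would expand each right-hand side through order two, obtaining $1+2tc_{1}z+\big(2tc_{2}+(4t^{2}-1)c_{1}^{2}\big)z^{2}+\cdots$ and its $w$-analogue.

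Next I would expand the left-hand side of \eqref{ieq3}. Denote it by $\Phi(z)$ and use the binomial series for $(f(z)/z)^{\mu}$ and $(f(z)/z)^{\mu-1}$ together with $f'(z)$ and $zf''(z)$. I expect the constant term to be $1$, the coefficient of $z$ to be $(\lambda+\mu+2\xi\delta)a_{2}$, and the coefficient of $z^{2}$ to be $(2\lambda+\mu+6\xi\delta)a_{3}+\tfrac{1}{2}(\mu-1)(2\lambda+\mu)a_{2}^{2}$. Running the identical computation for $g=f^{-1}$, whose first coefficients are $-a_{2}$ and $2a_{2}^{2}-a_{3}$ by \eqref{ieq2}, produces the analogous relations for the $w$-expansion. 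Matching coefficients then yields four scalar equations relating $a_{2},a_{3}$ to $c_{1},c_{2},d_{1},d_{2}$.

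From the two first-order equations I would read off $c_{1}=-d_{1}$ and $(\lambda+\mu+2\xi\delta)a_{2}=2tc_{1}$. Adding the two second-order equations eliminates $a_{3}$, and after substituting $c_{1}^{2}=d_{1}^{2}=\big((\lambda+\mu+2\xi\delta)a_{2}/(2t)\big)^{2}$ I obtain a single relation of the form
\[
\big[A^{2}-2t^{2}\big(2A^{2}-(2\lambda+\mu)(\mu+1)-12\xi\delta\big)\big]a_{2}^{2}=4t^{3}(c_{2}+d_{2}),
\]
where $A:=\lambda+\mu+2\xi\delta$. Taking absolute values and applying $|c_{2}|\le 1$, $|d_{2}|\le 1$ gives $|a_{2}|^{2}\le 8t^{3}/|\,\cdots|$, which is precisely \eqref{theq1} since $\sqrt{8t^{3}}=2t\sqrt{2t}$. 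For $|a_{3}|$ I would instead subtract the two second-order equations; the quadratic terms and the $c_{1}^{2}-d_{1}^{2}$ term drop out (because $c_{1}^{2}=d_{1}^{2}$), leaving $a_{3}-a_{2}^{2}=t(c_{2}-d_{2})/(2\lambda+\mu+6\xi\delta)$. Combining the elementary bound $|a_{2}|^{2}\le 4t^{2}/A^{2}$ (from the first-order relation and $|c_{1}|\le 1$) with $|c_{2}-d_{2}|\le 2$ then yields \eqref{theq2}.

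The main obstacle will be the bookkeeping in the second-order coefficient of $\Phi(z)$ for a general real exponent $\mu$: the two binomial expansions must be multiplied against $f'(z)$, combined with the $(1-\lambda)$-weighted term and the $\xi\delta\,zf''(z)$ term, and one must verify the algebraic identity that, upon adding the two second-order equations, the combination $2(2\lambda+\mu+6\xi\delta)+(\mu-1)(2\lambda+\mu)$ collapses to $(2\lambda+\mu)(\mu+1)+12\xi\delta$, which is exactly what produces the clean expression under the radical in \eqref{theq1}. Once this simplification is confirmed, the remaining steps are routine.
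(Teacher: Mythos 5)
Your proposal is correct and follows essentially the same route as the paper: equate coefficients via the Schwarz functions, use $c_1=-d_1$, add the second-order equations and substitute $c_1^2+d_1^2$ from the first-order relation to get \eqref{theq1}, then subtract them and bound $a_2^2$ and $c_2-d_2$ separately to get \eqref{theq2}. The coefficient identities you anticipate (in particular that the $a_2^2$-coefficients sum to $(2\lambda+\mu)(\mu+1)+12\xi\delta$) match the paper's equations exactly, so no further comment is needed.
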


\begin{proof}
Let $f\in \mathscr{B}_{\Sigma }^{\mu }(\lambda ,\delta ,t)$. From (\ref{ieq3}) and (\ref{ieq4}), we have
\begin{equation} \label{eq1}
\left( 1-\lambda \right) \left(\frac{f(z)}{z}\right)^{\mu }+\lambda f^{\prime }(z)\left(\frac{f(z)}{z}\right)^{\mu -1}+\xi\delta z f^{\prime \prime }(z)= 1+U_{1}(t)w(z)+U_{2}(t)w^{2}(z)+\cdots
\end{equation}
and
\begin{equation} \label{eq2}
\left( 1-\lambda \right) \left(\frac{g(w)}{w}\right)^{\mu }+\lambda g^{\prime }(w)\left(\frac{g(w)}{w}\right)^{\mu -1}+\xi\delta w g^{\prime \prime }(w)= 1+U_{1}(t)v(w)+U_{2}(t)v^{2}(w)+\cdots,
\end{equation}
for some analytic functions
\begin{equation*} \label{eq3}
w(z)=c_{1}z+c_{2}z^{2}+c_{3}z^{3}+\cdots \qquad (z\in \mathbb{U}),
\end{equation*}
and
\begin{equation*} \label{eq4}
v(w)=d_{1}w+d_{2}w^{2}+d_{3}w^{3}+\cdots \quad (w\in \mathbb{U}),
\end{equation*}
such that $w(0)=v(0)=0,$ $|w(z)|<1$ $(z\in \mathbb{U})$ and $|v(w)|<1$ $\ (w\in \mathbb{U}).$

It follows from (\ref{eq1}) and (\ref{eq2}) that
\begin{equation*} \label{eq6}
\hspace{-.1in} \left( 1-\lambda \right) \left(\frac{f(z)}{z}\right)^{\mu }+\lambda f^{\prime }(z)\left(\frac{f(z)}{z}\right)^{\mu -1}+\xi\delta z f^{\prime \prime }(z)=
1+U_{1}(t)c_{1}z+\left[U_{1}(t)c_{2}+U_{2}(t)c_{1}^{2}\right] z^{2}+\cdots
\end{equation*}
and
\begin{equation*} \label{eq7}
\hspace{-.2in} \left( 1-\lambda \right) \left(\frac{g(w)}{w}\right)^{\mu }+\lambda g^{\prime }(w)\left(\frac{g(w)}{w}\right)^{\mu -1}+\xi\delta w g^{\prime \prime }(w)=
1+U_{1}(t)d_{1}w+\left[U_{1}(t)d_{2}+U_{2}(t)d_{1}^{2}\right] )w^{2}+\cdots.
\end{equation*}
\newpage
Equating the coefficients yields
\begin{equation} \label{eq8}
\hspace{1in} \left(\lambda+\mu + 2\xi\delta \right)a_{2}=U_{1}(t)c_{1},
\end{equation}
\begin{equation} \label{eq9}
\hspace{0.25in} (2\lambda+\mu)\left[\left(\frac{\mu -1}{2}\right)a_{2}^{2}+\left(1+\frac{6\delta }{2\lambda +1}\right)a_{3}\right]=U_{1}(t)c_{2}+U_{2}(t)c_{1}^{2},
\end{equation}
and
\begin{equation} \label{eq10}
\hspace{0.95in} -\left(\lambda+\mu +2\xi\delta \right)a_{2}=U_{1}(t)d_{1},
\end{equation}
\begin{equation} \label{eq11}
\hspace{-.3in} (2\lambda+\mu)\left[\left(\frac{\mu +3}{2}+\frac{12\delta }{2\lambda +1}\right)a_{2}^{2}-\left(1+\frac{6\delta }{2\lambda +1}\right)a_{3}\right]=U_{1}(t)d_{2}+U_{2}(t)d_{1}^{2}.
\end{equation}

From (\ref{eq8}) and (\ref{eq10}), we obtain
\begin{equation} \label{eq12}
c_{1}=-d_{1},
\end{equation}
and
\begin{equation} \label{eq13}
\hspace{-0.4in} 2\left(\lambda +\mu +2\xi\delta\right)^{2} a_{2}^2=U_{1}^2(t)\left(c_{1}^2+d_{1}^2\right).
\end{equation}

By adding (\ref{eq9}) to (\ref{eq11}), we get
\begin{equation} \label{eq14}
(2\lambda +\mu)\left[1+\mu +\frac{12\delta }{2\lambda +1}\right] a_{2}^{2}=U_{1}(t)\left(c_{2}+d_{2}\right) +U_{2}(t)\left( c_{1}^{2}+d_{1}^{2}\right).
\end{equation}

By using (\ref{eq13}) in (\ref{eq14}), we obtain
\begin{equation} \label{eq15}
\left[ (2\lambda+\mu)(\mu +1)+12\xi\delta-\frac{2U_{2}(t)\left(\lambda+\mu +2\xi\delta \right)^{2}}{U_{1}^{2}(t)}\right] a_{2}^{2}=U_{1}(t)\left(c_{2}+d_{2}\right).
\end{equation}

It is fairly well known \cite{Duren} that if $|w(z)|<1$ and $|v(w)|<1,$ then
\begin{equation} \label{eq5}
|c_{j}|\leq 1  \ \text{and} \ |d_{j}|\leq 1 \ \text{for all} \ j\in \mathbb{N}.
\end{equation}

By considering (\ref{ieq5}) and (\ref{eq5}), we get from (\ref{eq15}) the desired inequality (\ref{theq1}).

Next, by subtracting (\ref{eq11}) from (\ref{eq9}), we have
\begin{equation} \label{eq16}
2(2\lambda +\mu)\left(1+\frac{6\delta }{2\lambda +1}\right)a_{3}-2(2\lambda +\mu)\left(1+\frac{6\delta }{2\lambda +1}\right)a_{2}^{2}=U_{1}(t)\left( c_{2}-d_{2}\right) +U_{2}(t)\left(c_{1}^{2}-d_{1}^{2}\right).
\end{equation}

Further, in view of (\ref{eq12}), it follows from (\ref{eq16}) that
\begin{equation} \label{eq17}
a_{3}=a_{2}^{2}+\frac{U_{1}(t)}{2(2\lambda +\mu+6\xi\delta)}\left(c_{2}-d_{2}\right).
\end{equation}

By considering (\ref{eq13}) and (\ref{eq5}), we get from (\ref{eq17}) the desired inequality (\ref{theq2}). This completes the proof of Theorem \ref{thm1}.
\end{proof}

Taking $\lambda =1$, $\mu =1$ and $\delta = 0$ in Theorem \ref{thm1}, we get the following consequence.

\begin{co} \label{cor1} \cite{Bul}
Let the function $f(z)$ given by (\ref{ieq1}) be in the class $\mathscr{B}_{\Sigma }(t)$. Then
\begin{equation*}
|a_{2}|\leq \frac{t\sqrt{2t}}{\sqrt{1-t^{2}}},
\end{equation*}
and
\begin{equation*}
|a_{3}|\leq t^{2}+\frac{2}{3}t.
\end{equation*}
\end{co}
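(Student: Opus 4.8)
The plan is to obtain this corollary as a direct specialization of Theorem~\ref{thm1}, substituting $\lambda = 1$, $\mu = 1$, and $\delta = 0$ into the two bounds (\ref{theq1}) and (\ref{theq2}). The key structural simplification is that $\delta = 0$ annihilates every term carrying the factor $\xi\delta$; consequently the parameter $\xi$ (which equals $1$ under this substitution in any case) plays no role, and both formulas collapse considerably.

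First I would record the handful of parameter combinations that actually appear in the two bounds. Under the substitution one finds $\lambda + \mu + 2\xi\delta = 2$, $2\lambda + \mu = 3$, $(2\lambda+\mu)(\mu+1) = 6$, and $2\lambda + \mu + 6\xi\delta = 3$. Feeding these into the radicand of (\ref{theq1}), the quantity inside the absolute value becomes $4 - 2[\,2\cdot 4 - 6 - 0\,]t^{2} = 4 - 4t^{2} = 4(1-t^{2})$. Because $t \in (\tfrac{1}{2},1)$ guarantees $1 - t^{2} > 0$, the absolute value is redundant and $\sqrt{4(1-t^{2})} = 2\sqrt{1-t^{2}}$; the factor $2$ then cancels the $2$ in the numerator $2t\sqrt{2t}$, yielding $|a_{2}| \le t\sqrt{2t}/\sqrt{1-t^{2}}$, exactly as claimed.

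For the third coefficient I would substitute the same values into (\ref{theq2}), obtaining $4t^{2}/4 + 2t/3 = t^{2} + \tfrac{2}{3}t$, which matches the stated bound. I anticipate no genuine obstacle: the whole argument is arithmetic verification of a special case, and the only point requiring a moment's care is the sign check on $1 - t^{2}$ that licenses dropping the absolute value over the admissible range $t \in (\tfrac{1}{2},1)$.
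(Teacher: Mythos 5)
Your proposal is correct and matches the paper's approach exactly: the paper derives this corollary precisely by setting $\lambda=1$, $\mu=1$, $\delta=0$ in Theorem~\ref{thm1}, and your arithmetic (the radicand collapsing to $4(1-t^2)$, the cancellation of the factor $2$, and the evaluation of the $|a_3|$ bound) is all accurate. The sign observation that $1-t^2>0$ for $t\in(\tfrac{1}{2},1)$, which lets you drop the absolute value, is the only non-mechanical point and you have handled it correctly.
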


Taking $\mu =1$ and $\delta = 0$ in Theorem \ref{thm1}, we get the following consequence.

\begin{co} \cite{Bul}
Let the function $f(z)$  given by (\ref{ieq1}) be in the class $\mathscr{B}_{\Sigma }(\lambda ,t)$. Then
\begin{equation*}
|a_{2}|\leq \frac{2t\sqrt{2t}}{\sqrt{|(\lambda+1)^2-4\lambda^2 t^{2}|}}
\end{equation*}
and
\begin{equation*}
 |a_{3}|\leq \frac{4t^{2}}{(\lambda+1)^2}+\frac{2t}{2\lambda+1}.
\end{equation*}
\end{co}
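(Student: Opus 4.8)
The plan is to obtain this corollary as a direct specialization of Theorem \ref{thm1}, setting $\mu = 1$ and $\delta = 0$ in the general estimates (\ref{theq1}) and (\ref{theq2}). Since $\delta = 0$, every occurrence of the auxiliary parameter $\xi$ is multiplied by $\delta$ and therefore disappears from the bounds; for completeness one checks that $\xi = \frac{2\lambda + \mu}{2\lambda + 1} = \frac{2\lambda + 1}{2\lambda + 1} = 1$ when $\mu = 1$, but this value is immaterial to the final expressions. Thus the whole argument reduces to replacing $\mu$ by $1$ and deleting all $\xi\delta$-terms in the two displayed inequalities of Theorem \ref{thm1}.

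First I would dispose of the bound on $|a_3|$, which requires no real work. Under $\mu = 1$, $\delta = 0$ one has $\lambda + \mu + 2\xi\delta = \lambda + 1$ and $2\lambda + \mu + 6\xi\delta = 2\lambda + 1$, so (\ref{theq2}) collapses immediately to $\frac{4t^{2}}{(\lambda+1)^{2}} + \frac{2t}{2\lambda+1}$, which is exactly the claimed estimate.

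Next I would simplify the quantity under the square root in (\ref{theq1}). After substitution, and using $(2\lambda+\mu)(\mu+1) = 2(2\lambda+1)$ together with $12\xi\delta = 0$, the radicand becomes $(\lambda+1)^{2} - 2\big[2(\lambda+1)^{2} - 2(2\lambda+1)\big]t^{2}$. The one step that is not purely mechanical is recognizing the elementary identity $(\lambda+1)^{2} - (2\lambda+1) = \lambda^{2}$, which lets the bracketed term be rewritten as $2\lambda^{2}$ and hence the full radicand as $(\lambda+1)^{2} - 4\lambda^{2}t^{2}$. This yields $|a_2| \leq \frac{2t\sqrt{2t}}{\sqrt{|(\lambda+1)^{2} - 4\lambda^{2}t^{2}|}}$, as required.

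There is no substantive obstacle in this corollary: it is simply an instance of the already-established Theorem \ref{thm1}, and the only point that demands any attention is the factorization $(\lambda+1)^{2} - (2\lambda+1) = \lambda^{2}$, which produces the clean form of the denominator appearing in the estimate for $|a_2|$.
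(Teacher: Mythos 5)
Your proposal is correct and matches the paper exactly: the paper derives this corollary by the same direct substitution $\mu=1$, $\delta=0$ into Theorem \ref{thm1}, and your simplification of the radicand via $(\lambda+1)^{2}-(2\lambda+1)=\lambda^{2}$ is the right (and only) computation needed.
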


Taking $\delta = 0$ in Theorem \ref{thm1}, we get the following consequence.

\begin{co} \cite{M}
Let the function $f(z)$  given by (\ref{ieq1}) be in the class $\mathscr{B}_{\Sigma }^{\mu }(\lambda ,t)$. Then
\begin{equation*}
|a_{2}|\leq \frac{2t\sqrt{2t}}{\sqrt{|(\lambda+\mu)^{2}-2[2(\lambda+\mu )^{2}- (2\lambda + \mu)(\mu+1)]t^{2}|}}
\end{equation*}
and
\begin{equation*}
 \hspace{-1.7in}|a_{3}|\leq \frac{4t^{2}}{(\lambda + \mu )^{2}}+\frac{2t}{2\lambda+\mu}.
\end{equation*}
\end{co}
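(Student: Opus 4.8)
The plan is to derive this corollary as a direct specialization of Theorem \ref{thm1} by setting $\delta = 0$. The essential observation is that although $\xi = \frac{2\lambda+\mu}{2\lambda+1}$ is generally nonzero, the parameter $\delta$ enters both the class definition and the coefficient bounds of Theorem \ref{thm1} only through the product $\xi\delta$. Consequently, taking $\delta = 0$ makes every occurrence of $\xi\delta$ vanish, irrespective of the value of $\xi$, and this is precisely what collapses the general bounds to the stated ones.

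First I would verify the class inclusion. Setting $\delta = 0$ in the subordinations (\ref{ieq3}) and (\ref{ieq4}) removes the term $\xi\delta\, z f''(z)$, so that membership in $\mathscr{B}_{\Sigma}^{\mu}(\lambda, 0, t)$ coincides with membership in the class $\mathscr{B}_{\Sigma}^{\mu}(\lambda, t)$ of Bulut et al. \cite{M}. Hence any $f \in \mathscr{B}_{\Sigma}^{\mu}(\lambda, t)$ automatically satisfies the hypotheses of Theorem \ref{thm1} with $\delta = 0$, and the bounds (\ref{theq1}) and (\ref{theq2}) apply verbatim after this substitution.

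Next I would carry out the substitution in the two estimates. In (\ref{theq1}) the factor $\lambda + \mu + 2\xi\delta$ reduces to $\lambda + \mu$ and the summand $12\xi\delta$ inside the bracket drops out, so the radicand becomes $|(\lambda+\mu)^2 - 2[2(\lambda+\mu)^2 - (2\lambda+\mu)(\mu+1)]t^2|$, giving exactly the asserted estimate for $|a_2|$. Likewise, in (\ref{theq2}) the quantity $(\lambda+\mu+2\xi\delta)^2$ becomes $(\lambda+\mu)^2$ and the denominator $2\lambda+\mu+6\xi\delta$ becomes $2\lambda+\mu$, yielding $|a_3| \leq \frac{4t^2}{(\lambda+\mu)^2} + \frac{2t}{2\lambda+\mu}$. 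Since the argument is a purely algebraic specialization of an already-established theorem, there is no genuine obstacle; the only point deserving a moment's attention is the bookkeeping that all three occurrences of $\xi\delta$---appearing as $2\xi\delta$, $12\xi\delta$, and $6\xi\delta$---vanish simultaneously under $\delta = 0$, so that no residual $\xi$-dependence survives in the final bounds.
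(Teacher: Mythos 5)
Your proposal is correct and matches the paper exactly: the paper obtains this corollary simply by setting $\delta=0$ in Theorem \ref{thm1}, which is precisely the specialization you carry out. Your additional remark that every occurrence of $\delta$ enters only through the product $\xi\delta$ (so the value of $\xi$ is irrelevant once $\delta=0$) is a correct and slightly more careful piece of bookkeeping than the paper bothers to record.
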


Taking $\mu =1$ in Theorem \ref{thm1}, we get the following consequence.

\begin{co} \cite{C3}
Let the function $f(z)$  given by (\ref{ieq1}) be in the class $\mathscr{B}_{\Sigma }(\lambda ,\delta ,t)$. Then
\begin{equation*}
|a_{2}|\leq \frac{2t\sqrt{2t}}{\sqrt{\left|(1+ \lambda+2\delta)^2-4\left[(\lambda+2\delta)^2-2\delta\right]t^{2}\right|}}
\end{equation*}
and
\begin{equation*}
 \hspace{-.7in} |a_{3}|\leq \frac{4t^{2}}{(1+ \lambda+2\delta)^2}+\frac{2t}{1+2\lambda+6\delta}.
\end{equation*}
\end{co}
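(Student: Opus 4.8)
The plan is to obtain this corollary as the special case $\mu = 1$ of Theorem \ref{thm1}, which has already been established; hence no fresh coefficient estimation is needed, and the entire task reduces to a substitution followed by one algebraic simplification.

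First I would record the crucial collapse of the auxiliary parameter: setting $\mu = 1$ in $\xi = \frac{2\lambda + \mu}{2\lambda + 1}$ gives $\xi = \frac{2\lambda + 1}{2\lambda + 1} = 1$, so that every occurrence of $\xi\delta$ in the bounds (\ref{theq1}) and (\ref{theq2}) becomes simply $\delta$. With this in hand, the prefactor $\lambda + \mu + 2\xi\delta$ becomes $1 + \lambda + 2\delta$, the product $(2\lambda + \mu)(\mu + 1)$ becomes $(2\lambda + 1)\cdot 2 = 4\lambda + 2$, and $12\xi\delta$ becomes $12\delta$.

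For the bound on $|a_2|$, after substituting into (\ref{theq1}) the quantity inside the absolute value in the denominator carries the $t^2$-coefficient
\[
2\big[\,2(1 + \lambda + 2\delta)^2 - (4\lambda + 2) - 12\delta\,\big].
\]
The one point requiring care is to verify that this reduces to the compact coefficient $4\big[(\lambda + 2\delta)^2 - 2\delta\big]$ appearing in the claimed bound. Writing $u = \lambda + 2\delta$, so that $1 + \lambda + 2\delta = u + 1$ and $(u+1)^2 = u^2 + 2u + 1$, the displayed expression expands to $4u^2 + 8u + 4 - 8\lambda - 4 - 24\delta$; since $8u = 8\lambda + 16\delta$, the linear and constant terms cancel and one is left with $4u^2 - 8\delta = 4\big[(\lambda + 2\delta)^2 - 2\delta\big]$, exactly as required. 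Substituting the simplified prefactor $1 + \lambda + 2\delta$ then yields the stated estimate for $|a_2|$.

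For the bound on $|a_3|$, I would substitute $\mu = 1$ and $\xi = 1$ directly into (\ref{theq2}): the squared factor $(\lambda + \mu + 2\xi\delta)^2$ becomes $(1 + \lambda + 2\delta)^2$ and the denominator $2\lambda + \mu + 6\xi\delta$ becomes $1 + 2\lambda + 6\delta$, producing the claimed inequality with no further manipulation. The main (and essentially only) obstacle is therefore the bookkeeping in the $|a_2|$ simplification, which the change of variable $u = \lambda + 2\delta$ renders transparent.
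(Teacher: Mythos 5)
Your proposal is correct and takes exactly the paper's route: the corollary is stated there as the specialization $\mu=1$ of Theorem \ref{thm1}, with $\xi$ collapsing to $1$. Your algebraic check that $2\bigl[2(1+\lambda+2\delta)^2-(4\lambda+2)-12\delta\bigr]=4\bigl[(\lambda+2\delta)^2-2\delta\bigr]$ correctly supplies the only detail the paper leaves implicit.
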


\section{Fekete-Szeg\"{o} problem for the function class $\mathscr{B}_{\Sigma }^{\mu }(\lambda ,\delta ,t)$}
Now, we are ready to find the sharp bounds of Fekete-Szeg\"{o} functional $a_{3}-\eta a_{2}^{2}$ defined for $\mathscr{B}_{\Sigma }^{\mu }(\lambda ,\delta ,t)$ given by (\ref{ieq1}). The results presented in this section improve or generalize the earlier results of Bulut et al. \cite{M}, Yousef et al. \cite{C3}, and other authors in terms of the ranges of the parameter under consideration.

\begin{thm}
\label{thm2} Let the function $f(z)$  given by (\ref{ieq1}) be in the class $\mathscr{B}_{\Sigma }^{\mu }(\lambda ,\delta ,t)$. Then for some $\eta \in \mathbb{R}$,
\begin{equation} \label{theq3}
|a_{3}-\eta a_{2}^{2}|\leq \left\{
\begin{array}{c}
\qquad \ \ \frac{2t}{2\lambda +\mu+6\xi\delta}, \qquad \ \qquad \qquad \qquad \qquad \quad \ \ |\eta -1|\leq M \bigskip \\
\frac{8|\eta -1|t^{3}}{\left|(\lambda+\mu+2\xi\delta)^2-2\left[2(\lambda+\mu+2\xi\delta)^2-((2\lambda+\mu)(\mu+1)+12\xi\delta)\right]t^{2}\right|}, \ \ |\eta -1|\geq M%
\end{array}%
\right.
\end{equation}
where
\begin{equation*}
M:= \frac{\left|(\lambda+\mu+2\xi\delta)^2-2\left[2(\lambda+\mu+2\xi\delta)^2-((2\lambda+\mu)(\mu+1)+12\xi\delta)\right]t^{2}\right|}{4(2\lambda+\mu+2\xi\delta)t^{2}}.
\end{equation*}

\end{thm}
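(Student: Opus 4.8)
The plan is to recycle the two coefficient identities already isolated in the proof of Theorem~\ref{thm1}, namely (\ref{eq15}) and (\ref{eq17}), and to reduce the Fekete--Szeg\"o functional to a single linear form in the Schwarz coefficients $c_2$ and $d_2$. First I would solve (\ref{eq15}) for $a_2^2$,
\[
a_2^2=\frac{U_1^3(t)\,(c_2+d_2)}{\bigl[(2\lambda+\mu)(\mu+1)+12\xi\delta\bigr]U_1^2(t)-2U_2(t)(\lambda+\mu+2\xi\delta)^2},
\]
and abbreviate the denominator as $h(t)$. Substituting $U_1(t)=2t$ and $U_2(t)=4t^2-1$ from (\ref{ieq5}), a short computation gives the closed form
\[
h(t)=2\Bigl[(\lambda+\mu+2\xi\delta)^2-2\bigl(2(\lambda+\mu+2\xi\delta)^2-(2\lambda+\mu)(\mu+1)-12\xi\delta\bigr)t^2\Bigr],
\]
so that $h(t)$ is exactly twice the quantity appearing under the radical in (\ref{theq1}). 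This identification is the bridge that will make the second branch of (\ref{theq3}) match the stated denominator.

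Next I would write $a_3-\eta a_2^2=(a_3-a_2^2)+(1-\eta)a_2^2$, insert (\ref{eq17}) for the first summand and the display above for the second, and collect terms to obtain
\[
a_3-\eta a_2^2=\bigl(\Phi(\eta)+\Psi\bigr)c_2+\bigl(\Phi(\eta)-\Psi\bigr)d_2,
\]
where $\Phi(\eta):=(1-\eta)U_1^3(t)/h(t)$ and $\Psi:=U_1(t)/\bigl(2(2\lambda+\mu+6\xi\delta)\bigr)$. Applying the triangle inequality together with $|c_2|\le 1$ and $|d_2|\le 1$ from (\ref{eq5}), and then the elementary identity $|p+q|+|p-q|=2\max\{|p|,|q|\}$, yields
\[
|a_3-\eta a_2^2|\le 2\max\bigl\{|\Phi(\eta)|,\,|\Psi|\bigr\}.
\]
This single inequality is the heart of the argument, and the two regimes of the maximum produce the two cases of the theorem.

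I would then translate the regimes into a condition on $\eta$. When $|\Phi(\eta)|\le|\Psi|$ the right-hand side collapses to $2|\Psi|=2t/(2\lambda+\mu+6\xi\delta)$, the first branch; when $|\Phi(\eta)|\ge|\Psi|$ it equals $2|\Phi(\eta)|$, and inserting $U_1(t)=2t$ with the closed form of $h(t)$ turns this into the quotient $8|\eta-1|t^3/|\,\cdot\,|$ of the second branch. The switching point $|\Phi(\eta)|=|\Psi|$ rearranges to $|\eta-1|=M$, the normalizing factor being carried down from (\ref{eq17}). One genuine point of care is that $h(t)$ is not sign-definite on $\bigl(\tfrac12,1\bigr)$: it can vanish or change sign, so the absolute values must be retained throughout and the dichotomy phrased through $|\eta-1|$ rather than $\eta-1$, exactly as written in (\ref{theq3}).

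The hard part will be justifying the word \emph{sharp}. The upper bound above treats $c_2$ and $d_2$ as free unit-bounded parameters, but they are constrained: a Schwarz coefficient obeys $|c_2|\le 1-|c_1|^2$, so forcing $|c_2|=|d_2|=1$ pushes $c_1=d_1=0$, whence (\ref{eq8}) gives $a_2=0$ and (\ref{eq15}) forces $c_2=-d_2$. This configuration (realized after normalization by $w(z)=z^2$, $v(w)=-w^2$) cleanly attains the first branch $2|\Psi|$. The second branch instead requires $c_2$ and $d_2$ to align, which the rigidity forbids at the extreme; hence certifying its sharpness demands the genuine constrained optimization over admissible quadruples $(c_1,c_2,d_1,d_2)$ compatible with \emph{all} of (\ref{eq8})--(\ref{eq11}) simultaneously, and this is where I expect the argument to be most delicate.
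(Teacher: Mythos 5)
Your proposal is correct and follows essentially the same route as the paper: express $a_3-\eta a_2^2$ via (\ref{eq15}) and (\ref{eq17}) as $(\Phi(\eta)+\Psi)c_2+(\Phi(\eta)-\Psi)d_2$, bound it by $2\max\{|\Phi(\eta)|,|\Psi|\}$ using $|c_2|,|d_2|\le 1$, and convert the switching condition into a threshold on $|\eta-1|$. One small remark: your derivation (like the paper's own proof) actually yields the threshold with denominator $4(2\lambda+\mu+6\xi\delta)t^2$, so the $2\xi\delta$ appearing in the stated $M$ is evidently a typographical slip in the theorem statement, and your closing caveat about sharpness concerns a claim the paper asserts in prose but never proves.
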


\begin{proof}

Let $f\in \mathscr{B}_{\Sigma }^{\mu }(\lambda ,\delta ,t)$. By using (\ref{eq15}) and (\ref{eq17}) for some $\eta \in \mathbb{R} $, we get
\begin{equation*}
 \hspace{-0.15in} a_{3}-\eta a_{2}^{2}=\left( 1-\eta \right) \left[ \frac{U_{1}^{3}(t)\left(c_{2}+d_{2}\right) }{\left((2\lambda+\mu)(\mu+1)+12\xi\delta\right) U_{1}^{2}(t)-2(\lambda +\mu+2\xi\delta)^{2}U_{2}(t)}\right] +\frac{U_{1}(t)\left( c_{2}-d_{2}\right) }{2(2\lambda +\mu+6\xi\delta)}
\end{equation*}
\begin{equation*}
\hspace{-0.05in} =U_{1}(t)\left[ \left( h(\eta )+\frac{1}{2(2\lambda +\mu+6\xi\delta)}\right)
c_{2}+\left( h(\eta )-\frac{1}{2(2\lambda +\mu+6\xi\delta)}\right) d_2\right],
\end{equation*}
where
\begin{equation*}
 h(\eta )=\frac{U_{1}^{2}(t)\left( 1-\eta \right) }{\left((2\lambda+\mu)(\mu+1)+12\xi\delta\right) U_{1}^{2}(t)-2(\lambda +\mu+2\xi\delta)^{2}U_{2}(t)}.
\end{equation*}

Then, in view of (\ref{ieq5}), we easily conclude that
\begin{equation*}
|a_{3}-\eta a_{2}^{2}|\leq \left\{
\begin{array}{c}
\frac{2t}{2\lambda +\mu+6\xi\delta}, \ \ |h(\eta )|\leq \frac{1}{2\left( 2\lambda +\mu+6\xi\delta \right) } \bigskip \\
 4|h(\eta )|t, \ \ |h(\eta )|\geq \frac{1}{2\left( 2\lambda +\mu+6\xi\delta \right) }%
\end{array}%
\right.
\end{equation*}

This proves Theorem \ref{thm2}.
\end{proof}

We end this section with some corollaries concerning the sharp bounds of Fekete-Szeg\"{o} functional $a_{3}-\eta a_{2}^{2}$ defined for $f \in  \mathscr{B}_{\Sigma }^{\mu }(\lambda ,\delta ,t)$ given by (\ref{ieq1}).

Taking $\eta =1$ in Theorem \ref{thm2}, we get the following corollary.

\begin{co}
Let the function $f(z)$ given by (\ref{ieq1}) be in the class $\mathscr{B}_{\Sigma }^{\mu }(\lambda ,\delta ,t)$. Then
\begin{equation*}
|a_{3}-a_{2}^{2}|\leq \frac{2t}{2\lambda +\mu+6\xi\delta}.
\end{equation*}

\end{co}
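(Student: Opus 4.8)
The plan is to recognize this corollary as the specialization $\eta = 1$ of Theorem~\ref{thm2}, and then to check that this value lands in the first (constant) branch of the piecewise bound. First I would note that $M$ is manifestly nonnegative, being the absolute value of a quantity divided by the positive number $4(2\lambda+\mu+2\xi\delta)t^{2}$ (recall $t\in(1/2,1)$ and $\lambda\ge 1,\mu\ge 0,\delta\ge 0$, so the denominator is positive). Consequently $|\eta-1| = |1-1| = 0 \le M$, which places us in the first case of~\eqref{theq3}, and Theorem~\ref{thm2} immediately returns the value $\frac{2t}{2\lambda+\mu+6\xi\delta}$ asserted here.

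More transparently, I would give a direct derivation that sidesteps the full Fekete--Szeg\"{o} machinery, since choosing $\eta=1$ annihilates the $a_2^2$ contribution altogether. The relation already recorded in the proof of Theorem~\ref{thm1}, namely equation~\eqref{eq17}, reads
\[
a_{3}-a_{2}^{2}=\frac{U_{1}(t)}{2(2\lambda+\mu+6\xi\delta)}\left(c_{2}-d_{2}\right),
\]
so the only further ingredients are the Schwarz-coefficient bounds $|c_{2}|\le 1$ and $|d_{2}|\le 1$ from~\eqref{eq5}, together with the value $U_{1}(t)=2t$ read off from the recurrence defining the Chebyshev polynomials of the second kind.

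A single application of the triangle inequality then yields
\[
|a_{3}-a_{2}^{2}|\le \frac{U_{1}(t)}{2(2\lambda+\mu+6\xi\delta)}\bigl(|c_{2}|+|d_{2}|\bigr)\le \frac{2\,U_{1}(t)}{2(2\lambda+\mu+6\xi\delta)}=\frac{2t}{2\lambda+\mu+6\xi\delta},
\]
which is exactly the claimed estimate. There is essentially no obstacle in this argument: the only point deserving a word of care is confirming that the case split in Theorem~\ref{thm2} genuinely assigns $\eta=1$ to the constant branch, i.e. that $M\ge 0$, and this is immediate. I would favor the direct route, as it reduces the entire proof to equation~\eqref{eq17} plus one invocation of $|c_{j}|,|d_{j}|\le 1$, making the corollary self-contained rather than dependent on the piecewise bound of Theorem~\ref{thm2}.
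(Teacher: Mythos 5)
Your proposal is correct, and its primary route is exactly the paper's: the paper offers no separate proof, simply specializing Theorem~\ref{thm2} at $\eta=1$, where $|\eta-1|=0\leq M$ places you in the constant branch of (\ref{theq3}). Your alternative direct argument from (\ref{eq17}) together with $U_{1}(t)=2t$ and the bounds (\ref{eq5}) is also valid and arguably cleaner, since it bypasses the piecewise bound and the (trivial but necessary) verification that $M\geq 0$; it is the computation implicitly underlying the first branch of Theorem~\ref{thm2} anyway.
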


Taking $\lambda =1$, $\mu =1$ and $\delta = 0$ in Theorem \ref{thm2}, we get the following corollary.

\begin{co} \label{corol2} \cite{M}
Let the function $f(z)$ given by (\ref{ieq1}) be in the class $\mathscr{B}_{\Sigma }(t)$. Then for some $\eta \in \mathbb{R}$,
\begin{equation*}
|a_{3}-\eta a_{2}^{2}|\leq \left\{
\begin{array}{c}
\frac{2}{3}t,\qquad \ |\eta -1|\leq \frac{1-t^{2}}{3t^{2}} \bigskip \\
\frac{2|\eta -1|t^{3}}{1-t^{2}}, \ \ |\eta -1|\geq \frac{1-t^{2}}{3t^{2}}%
\end{array}%
\right.
\end{equation*}

\end{co}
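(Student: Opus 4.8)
The plan is to obtain this corollary as the direct specialization of Theorem \ref{thm2} to the parameter choice $\lambda = 1$, $\mu = 1$, $\delta = 0$. Since $\delta = 0$, every term carrying the factor $\xi\delta$ vanishes, so the precise value of $\xi = \frac{2\lambda+\mu}{2\lambda+1}$ plays no role whatsoever; I need only evaluate the surviving polynomial expressions in $\lambda$ and $\mu$ and feed them into the piecewise bound (\ref{theq3}).

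First I would compute the compound quantities appearing in Theorem \ref{thm2} at these values: $\lambda + \mu + 2\xi\delta = 2$, $2\lambda + \mu + 6\xi\delta = 3$, and $2\lambda + \mu + 2\xi\delta = 3$, together with $(2\lambda+\mu)(\mu+1) + 12\xi\delta = 6$ and $(\lambda+\mu+2\xi\delta)^2 = 4$. Substituting into the first branch of (\ref{theq3}) immediately yields the bound $\frac{2t}{3}$. For the second branch and for the threshold $M$, the governing subexpression is $(\lambda+\mu+2\xi\delta)^2 - 2\bigl[2(\lambda+\mu+2\xi\delta)^2 - ((2\lambda+\mu)(\mu+1)+12\xi\delta)\bigr]t^{2}$, which collapses to $4 - 2[8-6]t^2 = 4 - 4t^2$.

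The one point requiring genuine attention is resolving the absolute value $|4 - 4t^2|$ that sits in both the second-branch denominator and the numerator of $M$. This is settled by the defining restriction on the class: since $\mathscr{B}_{\Sigma}(t)$ is defined only for $t \in \left(\frac{1}{2},1\right)$, we have $t^2 < 1$, hence $4 - 4t^2 > 0$ and $|4 - 4t^2| = 4(1 - t^2)$. With this simplification the second-branch bound reduces to $\frac{8|\eta-1|t^3}{4(1-t^2)} = \frac{2|\eta-1|t^3}{1-t^2}$, and the threshold to $M = \frac{4(1-t^2)}{4\cdot 3\cdot t^2} = \frac{1-t^2}{3t^2}$, matching the stated inequality exactly. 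There is no real obstacle here: the argument is entirely routine arithmetic substitution into the already-established general result, and the only thing that could trip one up is the sign of $1-t^2$ inside the modulus, which the hypothesis $t \in \left(\frac{1}{2},1\right)$ determines at once.
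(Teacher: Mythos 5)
Your proposal is correct and follows exactly the paper's route: the corollary is obtained by the direct substitution $\lambda=1$, $\mu=1$, $\delta=0$ into Theorem \ref{thm2}, and your arithmetic (including the resolution of the absolute value via $t\in\left(\tfrac{1}{2},1\right)$) checks out. The paper gives no further detail beyond "Taking $\lambda=1$, $\mu=1$ and $\delta=0$ in Theorem \ref{thm2}," so your write-up is, if anything, more explicit than the original.
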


Taking $\eta =1$ in Corollary \ref{corol2}, we get the following corollary.

\begin{co} \cite{C3} 
Let the function $f(z)$ given be (\ref{ieq1}) be in the class $\mathscr{B}_{\Sigma}\left( t\right)$. Then
\begin{equation*}
|a_{3}-a_{2}^{2}|\leq \frac{2}{3} t.
\end{equation*}

\end{co}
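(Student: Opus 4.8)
The plan is to obtain this statement as the immediate $\eta = 1$ specialization of Corollary \ref{corol2}, which already records the complete Fekete-Szeg\"{o} estimate for the class $\mathscr{B}_{\Sigma}(t)$. First I would recall that Corollary \ref{corol2} gives a two-branch bound with threshold $M = \frac{1-t^{2}}{3t^{2}}$, the constant value $\frac{2}{3}t$ governing the branch $|\eta - 1| \leq M$. Putting $\eta = 1$ yields $|\eta - 1| = 0$, so the only point needing verification is that this value lands in the first branch.

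That verification is elementary: since $t \in \left(\frac{1}{2},1\right)$ we have $1 - t^{2} > 0$, whence $M = \frac{1-t^{2}}{3t^{2}} > 0$ and therefore $0 = |\eta - 1| \leq M$ for every admissible $t$. The piecewise estimate thus collapses to its first branch and delivers $|a_{3} - a_{2}^{2}| \leq \frac{2}{3}t$. For completeness I would also indicate a self-contained route that bypasses Corollary \ref{corol2}: specializing the identity (\ref{eq17}) to $\lambda = \mu = 1$ and $\delta = 0$ (so that $\xi = 1$ and, by (\ref{ieq5}), $U_{1}(t) = 2t$) gives $a_{3} - a_{2}^{2} = \frac{U_{1}(t)}{2(2\lambda + \mu + 6\xi\delta)}(c_{2} - d_{2}) = \frac{t}{3}(c_{2} - d_{2})$, and the Schwarz-coefficient bounds $|c_{2}| \leq 1$, $|d_{2}| \leq 1$ from (\ref{eq5}) immediately yield $|a_{3} - a_{2}^{2}| \leq \frac{2}{3}t$.

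There is essentially no obstacle here: the whole content is the substitution $\eta = 1$, and the single subtlety---confirming that this value sits in the correct branch of the piecewise bound---reduces to the positivity of $1 - t^{2}$ on $\left(\frac{1}{2},1\right)$.
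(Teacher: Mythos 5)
Your proposal matches the paper's own derivation exactly: the corollary is obtained by setting $\eta = 1$ in Corollary \ref{corol2}, and your check that $0 = |\eta-1| \leq \frac{1-t^{2}}{3t^{2}}$ (valid since $t \in \left(\frac{1}{2},1\right)$) is the only point requiring verification. The alternative direct route via (\ref{eq17}) is a nice confirmation but not needed.
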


Taking $\mu =1$ and $\delta = 0$ in Theorem \ref{thm2}, we get the following corollary.

\begin{co} \label{corol222} \cite{M}
Let the function $f(z)$  given by (\ref{ieq1}) be in the class $\mathscr{B}_{\Sigma}\left( \lambda ,t\right)$. Then for some $\eta \in \mathbb{R}$,
\begin{equation}
|a_{3}-\eta a_{2}^{2}|\leq \left\{
\begin{array}{c}
 \frac{2t}{1+2\lambda}, \quad \qquad \ \ |\eta -1|\leq \frac{\left|(1+\lambda)^2-4\lambda^2t^{2}\right|}{4(1+2\lambda)t^{2}} \bigskip \\
\frac{8|\eta -1|t^{3}}{\left|(1+\lambda)^2-4\lambda^2t^{2}\right|}, \ \ \ |\eta -1|\geq \frac{\left|(1+\lambda)^2-4\lambda^2t^{2} \right|}{4(1+2\lambda)t^{2}}%
\end{array}%
\right.
\end{equation}
\end{co}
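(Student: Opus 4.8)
The plan is to obtain this corollary as the direct specialization $\mu = 1$, $\delta = 0$ of Theorem \ref{thm2}, since by the notational convention introduced in the Remark the class $\mathscr{B}_{\Sigma}(\lambda, t)$ is precisely $\mathscr{B}_{\Sigma}^{1}(\lambda, 0, t)$. No new analytic input is required: the work is entirely the substitution of these parameter values into the piecewise bound \eqref{theq3} and into the threshold $M$, followed by routine algebraic simplification.

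First I would record that with $\mu = 1$ and $\delta = 0$ the auxiliary parameter collapses, since $\xi = \frac{2\lambda + \mu}{2\lambda + 1} = \frac{2\lambda + 1}{2\lambda + 1} = 1$, so every occurrence of $\xi\delta$ vanishes. This immediately simplifies the constituent quantities: $\lambda + \mu + 2\xi\delta = \lambda + 1$, whence $(\lambda + \mu + 2\xi\delta)^2 = (1 + \lambda)^2$; the first-branch bound becomes $\frac{2t}{2\lambda + \mu + 6\xi\delta} = \frac{2t}{1 + 2\lambda}$; and the denominator of the threshold $M$ becomes $4(2\lambda + \mu + 2\xi\delta)t^2 = 4(1 + 2\lambda)t^2$.

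The one computation that requires care is the simplification of the common bracket appearing both in the second-branch denominator and in the numerator of $M$, namely $2\left[2(\lambda + \mu + 2\xi\delta)^2 - \left((2\lambda + \mu)(\mu + 1) + 12\xi\delta\right)\right]$. Substituting the values gives $2\left[2(\lambda + 1)^2 - 2(2\lambda + 1)\right] = 2\left[2\lambda^2 + 4\lambda + 2 - 4\lambda - 2\right] = 4\lambda^2$. Consequently the expression $(\lambda + \mu + 2\xi\delta)^2 - 2[\cdots]\,t^2$ reduces to $(1 + \lambda)^2 - 4\lambda^2 t^2$, matching the claimed denominator in the large-$|\eta - 1|$ regime and the claimed numerator of $M$.

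Assembling these pieces, the two branches of Theorem \ref{thm2} become exactly the two branches in the statement, with $M = \frac{\left|(1 + \lambda)^2 - 4\lambda^2 t^2\right|}{4(1 + 2\lambda)t^2}$, which completes the proof. I do not anticipate any genuine obstacle here; the only place a sign or coefficient slip could intrude is the bracket expansion above, so I would double-check that step by re-expanding $2(\lambda + 1)^2$ and $2(2\lambda + 1)$ separately before subtracting.
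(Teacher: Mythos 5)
Your proposal is correct and matches the paper's own derivation exactly: the paper obtains this corollary precisely by setting $\mu=1$ and $\delta=0$ in Theorem \ref{thm2}, and your algebra (in particular the reduction of the bracket to $4\lambda^{2}$ and of $\xi$ to $1$) checks out. Nothing further is needed.
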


Taking $\eta =1$ in Corollary \ref{corol222}, we get the following corollary.

\begin{co} \cite{C3} 
Let the function $f(z)$ given by (\ref{ieq1}) be in the class $\mathscr{B}_{\Sigma}\left( \lambda ,t\right)$. Then
\begin{equation*}
|a_{3}-a_{2}^{2}|\leq \frac{2t}{1+2\lambda}.
\end{equation*}
\end{co}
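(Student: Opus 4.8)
The plan is to obtain this bound as the $\eta = 1$ instance of the Fekete--Szeg\"o estimate already proved for the class $\mathscr{B}_{\Sigma}(\lambda,t)$ in Corollary \ref{corol222} (equivalently, as the $\mu=1$, $\delta=0$, $\eta=1$ specialization of Theorem \ref{thm2}). Since Corollary \ref{corol222} already furnishes the full two-branch inequality for $|a_3-\eta a_2^2|$, with the branch switch occurring at the threshold $\frac{\left|(1+\lambda)^2-4\lambda^2t^2\right|}{4(1+2\lambda)t^2}$, the entire content of the present statement reduces to deciding which branch is active at $\eta=1$.

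First I would put $\eta=1$, so that $|\eta-1|=0$. The threshold $\frac{\left|(1+\lambda)^2-4\lambda^2t^2\right|}{4(1+2\lambda)t^2}$ is manifestly nonnegative: its numerator is an absolute value, and its denominator $4(1+2\lambda)t^2$ is strictly positive on the admissible range $\lambda\geq 1$, $t\in\left(\frac12,1\right)$. Hence $|\eta-1|=0$ trivially satisfies the condition $|\eta-1|\leq \frac{\left|(1+\lambda)^2-4\lambda^2t^2\right|}{4(1+2\lambda)t^2}$, placing us squarely in the first (constant) branch, from which we read off $|a_3-a_2^2|\leq \frac{2t}{1+2\lambda}$.

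As an independent sanity check that does not route through the piecewise corollary, I would re-derive the same value straight from the coefficient identity (\ref{eq17}) of Theorem \ref{thm1}. Specializing to $\mu=1$, $\delta=0$ (so that $\xi=1$ and $2\lambda+\mu+6\xi\delta=1+2\lambda$) collapses (\ref{eq17}) to $a_3-a_2^2=\frac{U_1(t)}{2(1+2\lambda)}\left(c_2-d_2\right)$. Applying the Schwarz-coefficient bounds $|c_2|\leq 1$ and $|d_2|\leq 1$ from (\ref{eq5}) together with $U_1(t)=2t$ gives $|a_3-a_2^2|\leq \frac{2t}{1+2\lambda}$. The key structural point is that the $a_2^2$-contribution cancels exactly when $\eta=1$, so the estimate never invokes (\ref{eq15}) and reduces to a single clean triangle-inequality step.

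There is no substantive obstacle here: all the analytic work lives in the already-established Theorem \ref{thm2} and Corollary \ref{corol222}, and the one step peculiar to this statement---confirming that $\eta=1$ falls into the constant branch---is immediate from the nonnegativity of the threshold. The only care needed is to note that the denominators $1+2\lambda$ and $4(1+2\lambda)t^2$ are strictly positive throughout the parameter range, which holds since $\lambda\geq 1$ and $t>\frac12$.
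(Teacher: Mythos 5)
Your proposal is correct and follows the paper's own route: the paper obtains this corollary precisely by setting $\eta=1$ in Corollary \ref{corol222} (equivalently in Theorem \ref{thm2}), where $|\eta-1|=0$ trivially lands in the first branch. Your supplementary derivation directly from (\ref{eq17}) with $\mu=1$, $\delta=0$, $\xi=1$ is a valid consistency check but adds nothing beyond what the paper's specialization already gives.
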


Taking $\delta = 0$ in Theorem \ref{thm2}, we get the following corollary.

\begin{co} \label{corol2222} \cite{M}
Let the function $f(z)$  given by (\ref{ieq1}) be in the class $\mathscr{B}_{\Sigma }^{\mu }(\lambda,t)$. Then for some $\eta \in \mathbb{R}$,
\begin{equation}
|a_{3}-\eta a_{2}^{2}|\leq \left\{
\begin{array}{c}
\qquad \ \ \frac{2t}{2\lambda +\mu}, \qquad \ \qquad \qquad \quad \ |\eta -1|\leq \frac{\left|(\lambda+\mu)^2-2\left[2(\lambda+\mu)^2-(2\lambda+\mu)(\mu+1)\right]t^{2}\right|}{4(2\lambda+\mu)t^{2}} \bigskip \\
\frac{8|\eta -1|t^{3}}{\left|(\lambda+\mu)^2-2\left[2(\lambda+\mu)^2-(2\lambda+\mu)(\mu+1)\right]t^{2}\right|}, \ \ |\eta -1|\geq \frac{\left|(\lambda+\mu)^2-2\left[2(\lambda+\mu)^2-(2\lambda+\mu)(\mu+1)\right]t^{2}\right|}{4(2\lambda+\mu)t^{2}}%
\end{array}%
\right.
\end{equation}
\end{co}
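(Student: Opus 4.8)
The plan is to obtain Corollary \ref{corol2222} as a direct specialization of Theorem \ref{thm2} rather than to rerun the Fekete-Szeg\"{o} argument from scratch. Since Theorem \ref{thm2} is already established for all admissible parameters $\lambda \geq 1$, $\mu \geq 0$, $\delta \geq 0$, and $t \in \left(\frac{1}{2},1\right)$, and since the class $\mathscr{B}_{\Sigma}^{\mu}(\lambda,t)$ coincides by definition with $\mathscr{B}_{\Sigma}^{\mu}(\lambda,0,t)$, it suffices to set $\delta = 0$ in the conclusion of Theorem \ref{thm2} and simplify the resulting expressions.

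First I would record the key reduction: putting $\delta = 0$ forces $\xi\delta = 0$ everywhere, irrespective of the value $\xi = \frac{2\lambda+\mu}{2\lambda+1}$. Feeding this into the piecewise bound of Theorem \ref{thm2}, the first-case estimate $\frac{2t}{2\lambda+\mu+6\xi\delta}$ collapses to $\frac{2t}{2\lambda+\mu}$, the quantity $(\lambda+\mu+2\xi\delta)^2$ becomes $(\lambda+\mu)^2$, and the combination $(2\lambda+\mu)(\mu+1)+12\xi\delta$ reduces to $(2\lambda+\mu)(\mu+1)$. Hence the second-case bound simplifies to $\frac{8|\eta-1|t^3}{\left|(\lambda+\mu)^2-2[2(\lambda+\mu)^2-(2\lambda+\mu)(\mu+1)]t^2\right|}$, matching the corollary.

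Next I would carry the same substitution through the threshold $M$. With $\delta = 0$ the denominator $4(2\lambda+\mu+2\xi\delta)t^2$ becomes $4(2\lambda+\mu)t^2$, while its numerator is exactly the simplified absolute-value expression appearing in the second-case denominator; this reproduces verbatim the cutoff stated in the corollary. Assembling the two cases then yields the claimed inequality, and the subordination machinery, the coefficient bounds $|c_j|\leq 1$ and $|d_j|\leq 1$, and the construction of $h(\eta)$ need not be revisited, as they were all discharged inside the proof of Theorem \ref{thm2}.

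The sole potential obstacle here is clerical rather than mathematical: one must verify that the three distinct appearances of $\xi\delta$ in the statement of Theorem \ref{thm2} (namely the $6\xi\delta$ in the first-case denominator, the $2\xi\delta$ inside the squared term, and the $12\xi\delta$ inside the bracket) all vanish simultaneously under $\delta = 0$, and that the absolute values remain internally consistent after substitution. Since no new analytic input is required, I expect the entire proof to amount to a single line of specialization followed by routine algebraic simplification.
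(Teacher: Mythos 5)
Your proposal is correct and coincides with the paper's own derivation: the paper obtains Corollary \ref{corol2222} precisely by setting $\delta=0$ in Theorem \ref{thm2}, whereupon every occurrence of $\xi\delta$ vanishes and the bound and threshold $M$ simplify exactly as you describe. No further argument is needed.
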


Taking $\mu =1$ in Theorem \ref{thm2}, we get the following corollary.

\begin{co} \label{corol22222} \cite{C3}
Let the function $f(z)$  given by (\ref{ieq1}) be in the class $\mathscr{B}_{\Sigma}\left( \lambda ,\delta ,t\right)$. Then for some $\eta \in \mathbb{R}$,
\begin{equation}
|a_{3}-\eta a_{2}^{2}|\leq \left\{
\begin{array}{c}
\qquad \ \ \frac{2t}{1+2\lambda+6\delta}, \qquad \ \qquad  |\eta -1|\leq \frac{\left|(1+\lambda+2\delta)^2-4\left[(\lambda+2\delta)^2-2\delta\right]t^{2}\right|}{4(1+2\lambda+6\delta)t^{2}} \bigskip \\
\frac{8|\eta -1|t^{3}}{\left|(1+\lambda+2\delta)^2-4\left[(\lambda+2\delta)^2-2\delta\right]t^{2}\right|}, \ \ |\eta -1|\geq \frac{\left|(1+\lambda+2\delta)^2-4\left[(\lambda+2\delta)^2-2\delta\right]t^{2}\right|}{4(1+2\lambda+6\delta)t^{2}}%
\end{array}%
\right.
\end{equation}
\end{co}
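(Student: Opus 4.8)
The plan is to derive this corollary directly by specializing Theorem~\ref{thm2} to $\mu=1$, since by the Remark in Section~1 one has $\mathscr{B}_{\Sigma}(\lambda,\delta,t)=\mathscr{B}_{\Sigma}^{1}(\lambda,\delta,t)$. The value $\mu=1$ lies in the admissible range $\mu\ge 0$ of Theorem~\ref{thm2}, so no fresh subordination argument is needed; the whole task reduces to substitution together with one algebraic simplification. The first thing I would record is that $\mu=1$ collapses the auxiliary constant, namely $\xi=\frac{2\lambda+\mu}{2\lambda+1}=\frac{2\lambda+1}{2\lambda+1}=1$, so that every occurrence of $\xi\delta$ in Theorem~\ref{thm2} may be replaced by $\delta$.

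Next I would substitute $\mu=1$, $\xi=1$ into each ingredient of the two-branch estimate. The constant branch is immediate: $2\lambda+\mu+6\xi\delta=1+2\lambda+6\delta$, which yields the first line $\frac{2t}{1+2\lambda+6\delta}$. For the second branch I would compute the two blocks $\lambda+\mu+2\xi\delta=1+\lambda+2\delta$ and $(2\lambda+\mu)(\mu+1)+12\xi\delta=2(2\lambda+1)+12\delta$, and insert them into the expression $\big|(\lambda+\mu+2\xi\delta)^2-2[2(\lambda+\mu+2\xi\delta)^2-((2\lambda+\mu)(\mu+1)+12\xi\delta)]t^2\big|$ that appears in both the bound and the threshold $M$.

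The only genuine computation --- and the single place an arithmetic slip could occur --- is the identity
\[
2\Big[2(1+\lambda+2\delta)^2-\big(2(2\lambda+1)+12\delta\big)\Big]=4\big[(\lambda+2\delta)^2-2\delta\big],
\]
which I would verify by expanding both sides; after cancelling the terms linear in $\lambda$ and the constants, each side equals $4\lambda^2+16\lambda\delta+16\delta^2-8\delta$. Granting this, the second-branch denominator becomes $\big|(1+\lambda+2\delta)^2-4[(\lambda+2\delta)^2-2\delta]t^2\big|$, matching the statement exactly.

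Finally I would reconcile the threshold. Tracing the proof of Theorem~\ref{thm2}, the dividing condition is $|h(\eta)|\le\frac{1}{2(2\lambda+\mu+6\xi\delta)}$, so the correct denominator of $M$ must carry the factor $2\lambda+\mu+6\xi\delta$ (I would flag that the displayed $M$ writes $2\xi\delta$, which should read $6\xi\delta$ to be consistent with the constant branch). With $\mu=1$, $\xi=1$ this factor is $1+2\lambda+6\delta$, so $M$ specializes to $\frac{|(1+\lambda+2\delta)^2-4[(\lambda+2\delta)^2-2\delta]t^2|}{4(1+2\lambda+6\delta)t^2}$, which is precisely the threshold stated in the corollary. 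The main obstacle here is purely bookkeeping --- keeping the $6\delta$ versus $2\delta$ coefficients straight --- rather than anything conceptual.
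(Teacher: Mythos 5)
Your proposal is correct and follows exactly the paper's route: the paper obtains this corollary simply by setting $\mu=1$ (hence $\xi=1$) in Theorem~\ref{thm2}, and your substitution and the algebraic identity $2\bigl[2(1+\lambda+2\delta)^2-(2(2\lambda+1)+12\delta)\bigr]=4\bigl[(\lambda+2\delta)^2-2\delta\bigr]$ check out. Your observation that the denominator of $M$ in Theorem~\ref{thm2} should read $4(2\lambda+\mu+6\xi\delta)t^{2}$ rather than $4(2\lambda+\mu+2\xi\delta)t^{2}$ is also right; the corollary as printed is consistent with the corrected threshold, confirming it is a typo in the theorem statement.
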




\end{document}